\newtheorem{assumption}{Assumption}
\newtheorem{lemma}{Lemma}
\newtheorem{theorem}{Theorem}
\newtheorem{remark}{Remark}
\begin{document}

\title{Randomized-Accelerated FEAST: A Hybrid Approach for \\
Large-Scale Eigenvalue Problems}

\author{Ayush Nadiger\\[0.5em]
Department of Mathematics \& Statistics and\\
Department of Electrical \& Computer Engineering\\
University of Massachusetts Amherst}

\date{}

\maketitle

\begin{abstract}
We present Randomized-Accelerated FEAST (RA-FEAST), a hybrid algorithm that
combines contour-integration-based eigensolvers with randomized numerical linear
algebra techniques for efficiently computing partial eigendecompositions of
large-scale matrices arising in statistical applications. By incorporating
randomized subspace initialization to enable aggressive quadrature reduction and
truncated refinement iterations, our method achieves significant computational
speedups (up to $38\times$ on sparse graph Laplacian benchmarks at
$n = 8{,}000$) while maintaining high-accuracy approximations to the target
eigenspace. We provide a probabilistic error bound for the randomized warmstart,
a stability result for inexact FEAST iterations under general perturbations, and
a simple complexity model characterizing the trade-off between initialization
cost and solver speedup. Empirically, we demonstrate that RA-FEAST can be over
an order of magnitude faster than standard FEAST while preserving accuracy on
sparse Laplacian problems representative of modern spectral methods in
statistics.
\end{abstract}

\section{Introduction}

Let $A \in \mathbb{R}^{n \times n}$ be a symmetric positive definite matrix
arising from statistical applications such as sample covariance matrices, kernel
Gram matrices, or graph Laplacians. Many procedures in multivariate analysis,
dimension reduction, and manifold learning require computing a partial
eigendecomposition
\begin{equation}
  A v_i = \lambda_i v_i, \qquad i = 1, \dots, k,
  \label{eq:partial-eig}
\end{equation}
where $\lambda_{\max} \ge \lambda_1 \ge \cdots \ge \lambda_k \ge \lambda_{\min}
> 0$ are eigenvalues in a target spectral interval $[\lambda_{\min},
\lambda_{\max}]$ and $\{v_i\}$ are the corresponding orthonormal eigenvectors.

Classical approaches include power iteration (for dominant eigenvalues) and
Krylov subspace methods such as Lanczos or Arnoldi. More recently, the FEAST
algorithm~\cite{Polizzi2009, PolizziUserGuide} based on contour
integration in the complex plane has emerged as a powerful alternative,
particularly for finding eigenvalues in arbitrary spectral regions via
numerical approximation of the spectral projector. In parallel, randomized
numerical linear algebra has demonstrated remarkable efficiency for approximate
eigendecompositions with probabilistic guarantees; see, for example, the survey
of Halko, Martinsson, and Tropp~\cite{Halko2011}.

\paragraph{Our approach.}
We propose \emph{Randomized--Accelerated FEAST} (RA-FEAST), which
synergistically combines these two strands of work. RA-FEAST uses randomized
subspace iteration to construct a high-quality warmstart for the target
eigenspace, and then performs a small number of ``accelerated'' FEAST
iterations that exploit this warmstart to operate with aggressively reduced
quadrature order and relaxed linear-solver tolerances. Conceptually, the
randomized phase provides a cheap but informative approximation to the spectral
projector, while the contour-based phase sharpens this approximation to
high accuracy.

\paragraph{Contributions.}
The main contributions of this paper are:
\begin{itemize}
  \item We introduce RA-FEAST, a two-phase algorithm that combines randomized
        range finding with contour-based FEAST iterations for computing partial
        eigendecompositions in a prescribed spectral interval.
  \item We derive a probabilistic bound on the distance between the randomized
        warmstart subspace and the true target eigenspace, explicitly tracking
        the dependence on oversampling, power-iteration count, and eigengap.
  \item We analyze the stability of FEAST iterations under general perturbations
        induced by inexact quadrature and linear solves, obtaining an error
        recursion of the form $e_{m+1} \le \rho e_m + \varepsilon^{(m)} +
        O(\varepsilon_{\mathrm{mach}})$ that explains the observed behavior of
        RA-FEAST.
  \item We provide a simple complexity model showing when the cost of the
        randomized warmstart is negligible compared to the savings from reduced
        quadrature and relaxed linear solves, and validate this model
        empirically on sparse graph Laplacian problems.
\end{itemize}

\paragraph{Organization.}
Section~\ref{sec:background} reviews the FEAST algorithm and randomized
subspace methods. Section~\ref{sec:rafeast} presents the RA-FEAST algorithm and
its theoretical analysis. Section~\ref{sec:stats-apps} outlines statistical
applications where partial eigendecompositions in spectral windows are
particularly important. Section~\ref{sec:experiments} reports experimental
results on sparse graph Laplacians, and
Section~\ref{sec:conclusion} concludes with directions for future work.

\section{Background}
\label{sec:background}
\subsection{The FEAST Algorithm}

The FEAST algorithm computes eigenvalues in a specified interval $[\lambda_{\min},\lambda_{\max}]$
by exploiting the spectral projector onto the associated eigenspace. Let $\Gamma$ be a contour in the
complex plane enclosing the target eigenvalues. The spectral projector is
\begin{equation}
P = \frac{1}{2\pi i} \int_{\Gamma} (z I - A)^{-1} \, dz.
\label{eq:projector}
\end{equation}
Using a quadrature rule with $N_c$ nodes $\{z_j\}_{j=1}^{N_c}$ and weights $\{w_j\}_{j=1}^{N_c}$,
we approximate
\begin{equation}
P \approx \sum_{j=1}^{N_c} w_j (z_j I - A)^{-1}.
\label{eq:projector-quad}
\end{equation}

FEAST refines a subspace $Q^{(m)} \subset \mathbb{R}^n$ by iteratively applying this projector.

\begin{algorithm}[t]
\caption{FEAST Iteration}
\label{alg:feast}
\begin{algorithmic}[1]
\State \textbf{Input:} $A \in \mathbb{R}^{n \times n}$, interval $[\lambda_{\min},\lambda_{\max}]$,
quadrature nodes $\{z_j,w_j\}_{j=1}^{N_c}$, subspace dimension $m_0$.
\State Initialize random subspace $Q^{(0)} \in \mathbb{R}^{n \times m_0}$ with orthonormal columns.
\For{$m = 0,1,2,\dots$ until convergence}
  \State Compute
  \[
     \widetilde{Q} = \sum_{j=1}^{N_c} w_j (z_j I - A)^{-1} Q^{(m)}.
  \]
  \State Orthonormalize: $Q^{(m+1)} R = \widetilde{Q}$ (\texttt{QR} decomposition).
  \State Rayleigh--Ritz: $(Q^{(m+1)})^\top A Q^{(m+1)} = \widetilde{V} \widetilde{\Lambda} \widetilde{V}^\top$.
  \State Set $Q^{(m+1)} \leftarrow Q^{(m+1)} \widetilde{V}$.
\EndFor
\end{algorithmic}
\end{algorithm}

The dominant cost per iteration is solving $N_c$ linear systems
\[
  (z_j I - A) X_j = Q^{(m)}, \qquad j=1,\dots,N_c.
\]

\subsection{Randomized Subspace Methods}

Randomized subspace iteration approximates the top $k$ eigenvectors (or singular vectors) of a matrix.
A standard formulation for an $m \times n$ matrix $A$ is
\begin{equation}
Y = (A A^\top)^q A \Omega, \qquad [Q,R] = \mathrm{qr}(Y),
\label{eq:rand-range}
\end{equation}
where $\Omega \in \mathbb{R}^{n \times (k+p)}$ is a random Gaussian matrix with oversampling parameter
$p \ge 0$, and $q \ge 0$ is the number of power iterations. With high probability, the subspace spanned by
the columns of $Q$ captures the dominant action of $A$.

A typical error bound~\cite[Theorem~10.5]{Halko2011} is
\begin{equation}
\|A - QQ^\top A\|_2 \;\le\;
\left(1 + \sqrt{\frac{k}{p-1}}\right)\sigma_{k+1},
\label{eq:rand-error}
\end{equation}
where $\sigma_{k+1}$ is the $(k+1)$th singular value of $A$ and the bound holds with probability at
least $1 - \delta$ (up to additional logarithmic factors in $1/\delta$).

\section{Randomized-Accelerated FEAST}
\label{sec:rafeast}

\subsection{Algorithm Description}

RA-FEAST consists of two phases.

\paragraph{Phase 1: Randomized warmstart.}
We first estimate the target subspace dimension $m_0$ and obtain an initial approximation using
randomized methods. For a spectral interval $[\lambda_{\min},\lambda_{\max}]$, define the shifted and
scaled matrix
\begin{equation}
B = \frac{A - \lambda_{\min} I}{\lambda_{\max} - \lambda_{\min}}.
\label{eq:B-def}
\end{equation}
We apply randomized subspace iteration to $B$ with power-iteration count $q$ chosen such that
\begin{equation}
\sigma_{m_0+1}(B)^{2q+1} \ll \sigma_{m_0}(B)^{2q+1},
\label{eq:power-cond}
\end{equation}
which yields an approximate basis $Q_0 \in \mathbb{R}^{n \times m_0}$ for the target eigenspace.

\paragraph{Phase 2: Accelerated FEAST Refinement.} We then run modified FEAST iterations to refine the subspace. While the standard algorithm requires high-accuracy linear system solves and high-order quadrature (e.g., $N_c \ge 8$), RA-FEAST leverages the high-quality warmstart to relax these requirements. In our implementation, we achieve acceleration by reducing the quadrature order $N_c$ (e.g., $N_c \in \{2, 4\}$) and limiting the number of refinement iterations. This effectively treats the contour integration as an approximate operation, relying on the randomized initialization to provide the dominant spectral information.

\begin{algorithm}
\caption{RA-FEAST}
\begin{algorithmic}[1]
\State \textbf{Input:} $A \in \mathbb{R}^{n \times n}$, interval $[\lambda_{min}, \lambda_{max}]$, subspace size $m_0$, oversampling $p$, contour points $N_c$.
\State \textbf{Phase 1: Randomized Warmstart}
\State $B \leftarrow (A - \lambda_{min} I) / (\lambda_{max} - \lambda_{min})$
\State Draw $\Omega \sim \mathcal{N}(0, 1)^{n \times (m_0 + p)}$
\State $Y \leftarrow (B B^\top)^q B \Omega$
\State $Q^{(0)} \leftarrow \text{orth}(Y)$
\State \textbf{Phase 2: Accelerated FEAST}
\For{$m = 0, 1, \dots$ until convergence or max\_iter}
    \State \textbf{Step 2a: Contour Integration}
    \For{$j = 1, \dots, N_c$}
        \State Solve $(z_j I - A) X_j = Q^{(m)}$ \Comment{Reduced $N_c$ for speed}
    \EndFor
    \State $\tilde{Q} \leftarrow \sum_{j=1}^{N_c} w_j X_j$
    \State \textbf{Step 2b: Orthogonalization \& Projection}
    \State $Q^{(m+1)} R \leftarrow \tilde{Q}$ (QR factorization)
    \State Rayleigh-Ritz projection to get final $\tilde{\lambda}, \tilde{v}$
\EndFor
\State \textbf{Output:} Approximate eigenvalues $\{\tilde{\lambda}_i\}$ and eigenvectors $\{\tilde{v}_i\}$
\end{algorithmic}
\end{algorithm}

\subsection{Theoretical Analysis}

We now provide convergence analysis for RA-FEAST's two phases:
(i) the randomized warmstart initialization, and
(ii) the inexact FEAST iterations with adaptive precision control.
We track all constants explicitly.

\subsubsection{Assumptions and Notation}

\begin{assumption}[Spectral setup and algorithmic parameters]
\label{assump:setup}
\leavevmode
\begin{enumerate}
\item
$A \in \mathbb{R}^{n \times n}$ is symmetric with eigenvalue decomposition
$A = V \Lambda V^\top$ where $\Lambda = \mathrm{diag}(\lambda_1,\dots,\lambda_n)$
and $\lambda_1 \ge \lambda_2 \ge \cdots \ge \lambda_n$.

\item 
We target $m_0$ eigenvalues in the interval $[\lambda_{\min}, \lambda_{\max}]$
and, for the purposes of the analysis, assume that (possibly after an appropriate
shift--and--invert transformation of $A$) these correspond to the largest
eigenvalues $\lambda_1, \dots, \lambda_{m_0}$.

\item
Define the shifted and scaled matrix
\begin{equation}
  B := \frac{A - \lambda_{\min} I}{\lambda_{\max} - \lambda_{\min}},
  \label{eq:B-def}
\end{equation}
whose eigenvalues satisfy
$1 = \beta_1 \ge \cdots \ge \beta_{m_0} > \beta_{m_0+1} \ge \cdots \ge \beta_n
\ge 0$.

\item
The eigengap for the target subspace is
\begin{equation}
\delta_{\mathrm{gap}} := \beta_{m_0} - \beta_{m_0+1} > 0.
\label{eq:gap-def}
\end{equation}

\item
The FEAST contour $\Gamma$ encloses exactly the $m_0$ target eigenvalues with $N_c$ quadrature
points $\{z_j,w_j\}_{j=1}^{N_c}$ and is at distance at least $d_\Gamma > 0$ from all non-target
eigenvalues.

\item
Randomized initialization uses oversampling $p \ge 4$ and $q \ge 0$ power iterations.
The test matrix $\Omega \in \mathbb{R}^{n \times (m_0+p)}$ has i.i.d.\ entries
$\Omega_{ij} \sim \mathcal{N}(0,1)$.

\item
The QR decomposition is backward stable: for well-conditioned matrices,
\[
  \|\mathrm{orth}(X) - \mathrm{orth}(Y)\|_F
  \le (1 + \mathcal{O}(\varepsilon_{\mathrm{mach}})) \|X - Y\|_F,
\]
where $\varepsilon_{\mathrm{mach}} \approx 10^{-16}$.
\end{enumerate}
\end{assumption}

Let $V_1 \in \mathbb{R}^{n \times m_0}$ collect the eigenvectors of the target eigenspace; the
orthogonal projector onto this subspace is
\begin{equation}
P := V_1 V_1^\top.
\label{eq:P-def}
\end{equation}
For the randomized initialization output $Q_0 \in \mathbb{R}^{n \times m_0}$ with orthonormal columns,
define the approximate projector $\tilde{P} := Q_0 Q_0^\top$.
For FEAST iteration $m$, let $Q^{(m)} \in \mathbb{R}^{n \times m_0}$ have orthonormal columns and
define $P^{(m)} := Q^{(m)} (Q^{(m)})^\top$.

The canonical angle matrix between two $m_0$-dimensional subspaces has diagonal
$\Theta = \mathrm{diag}(\theta_1,\dots,\theta_{m_0})$ with $0 \le \theta_i \le \pi/2$.

\subsubsection{Preparatory Results}

The following lemmas collect standard facts from randomized numerical linear algebra and
matrix perturbation theory.

\begin{lemma}[Randomized range finder~{\cite[Theorem~10.5]{Halko2011}}]
\label{lem:rand-range}
Let $Q_0$ be obtained by applying randomized subspace iteration to $B$ with oversampling $p$
and $q$ power iterations as in~\eqref{eq:rand-range}. Then, with probability at least $1-\delta$,
\begin{equation}
\|(I - Q_0 Q_0^\top) B\|_2
   \le C_1(\delta)
      \sqrt{1 + \frac{m_0}{p-1}}\,
      \beta_{m_0+1}
      \left(\frac{\beta_{m_0+1}}{\beta_{m_0}}\right)^{2q},
\label{eq:Halko-bound}
\end{equation}
where $C_1(\delta) := 1 + \frac{p}{2} \ln\!\bigl(\frac{2 m_0}{\delta}\bigr)$.
For brevity, define
\begin{equation}
R_{\mathrm{Halko}}
  := C_1(\delta)
      \sqrt{1 + \frac{m_0}{p-1}}\,
      \beta_{m_0+1}
      \left(\frac{\beta_{m_0+1}}{\beta_{m_0}}\right)^{2q}.
\label{eq:R-Halko}
\end{equation}
\end{lemma}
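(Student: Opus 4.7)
Lemma~\ref{lem:rand-range} is essentially a restatement of the classical Halko--Martinsson--Tropp randomized range-finder bound with power iteration, so the plan is to reproduce their argument in three steps while verifying that the quoted constants match. First I would establish a deterministic structural bound: using the eigendecomposition $B = V\,\mathrm{diag}(\beta_1,\dots,\beta_n)\,V^\top$ and the partition $V = [V_1, V_2]$ with $V_1$ collecting the top $m_0$ eigenvectors, define $\Omega_1 := V_1^\top \Omega$ and $\Omega_2 := V_2^\top \Omega$. Applying Theorem~9.1 of~\cite{Halko2011} directly to $B$ gives
\begin{equation*}
  \|(I - Q_0 Q_0^\top) B\|_2^2
  \;\le\;
  \|\Sigma_2\|_2^2 + \|\Sigma_2 \Omega_2 \Omega_1^\dagger\|_2^2,
\end{equation*}
where $\Sigma_2 = \mathrm{diag}(\beta_{m_0+1},\dots,\beta_n)$ and $\Omega_1^\dagger$ is the Moore--Penrose pseudoinverse. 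This step is deterministic and requires only that $\Omega_1$ have full row rank, which holds almost surely under the Gaussian assumption.

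Next I would incorporate the $q$ power iterations. Since $B$ is symmetric positive semidefinite, $(BB^\top)^q B = B^{2q+1}$, so the range of $Y$ coincides with that produced by one-shot randomized range finding applied to $B^{2q+1}$, whose eigenvalues are $\beta_i^{2q+1}$. Passing the structural bound through $B^{2q+1}$ and then relating subspace accuracy back to $B$ via the principal-angle identity $\|(I - Q_0 Q_0^\top) v_i\|_2 = \sin\theta_i$ yields the multiplicative reduction $(\beta_{m_0+1}/\beta_{m_0})^{2q}$ in the final bound. To convert the deterministic estimate into a high-probability one, I would then invoke Gaussian concentration: for $\Omega_1 \in \mathbb{R}^{m_0 \times (m_0+p)}$ with $p \ge 4$, HMT Propositions~10.3--10.4 control $\|\Omega_1^\dagger\|_2$ on the order of $1/\sqrt{p}$ with logarithmic dependence on $1/\delta$, while $\|\Omega_2\|_2$ enjoys analogous sub-Gaussian tail bounds. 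Combining these yields the prefactor $C_1(\delta)\sqrt{1 + m_0/(p-1)}$ in front of $\beta_{m_0+1}(\beta_{m_0+1}/\beta_{m_0})^{2q}$.

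The main technical subtlety---and hence the step most prone to bookkeeping errors---is reconciling the logarithmic form $C_1(\delta) = 1 + (p/2)\ln(2 m_0/\delta)$ stated here with the native HMT tail bound, which is typically parameterized as $(e\sqrt{m_0+p}/(p+1))(2/\delta)^{1/(p+1)}$. I would bridge these two forms via the elementary inequality $x^{1/p} \le 1 + (\ln x)/p$ valid for $x \ge 1$, verify that the resulting log-form constant absorbs all lower-order factors correctly, and confirm that the failure probability $\delta$ accounts for the union bound over both the $\Omega_1^\dagger$ and $\Omega_2$ tail events. The remaining steps are routine algebra, so this constant-chasing is the only place where an inconsistency with the downstream analysis could sneak in.
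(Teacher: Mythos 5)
Your first and third steps are faithful to the Halko--Martinsson--Tropp machinery, but the second step---where you claim that passing the structural bound through $B^{2q+1}$ and then ``relating subspace accuracy back to $B$ via the principal-angle identity'' produces the multiplicative factor $(\beta_{m_0+1}/\beta_{m_0})^{2q}$ in front of $\beta_{m_0+1}$---is a genuine gap, and it cannot be repaired. What the HMT power-scheme argument actually gives is $\|(I-Q_0Q_0^\top)B\|_2 \le \|(I-Q_0Q_0^\top)B^{2q+1}\|_2^{1/(2q+1)}$, so after applying the deterministic bound to $B^{2q+1}$ the stochastic prefactor is damped to the power $1/(2q+1)$ and the resulting estimate stays of the form (constant)$^{1/(2q+1)}\,\beta_{m_0+1}$; no gap ratio appears. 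The inequality \eqref{eq:Halko-bound} as stated cannot be derived by any argument: since $Q_0Q_0^\top B$ has rank at most $m_0$, Eckart--Young gives the unconditional lower bound $\|(I-Q_0Q_0^\top)B\|_2 \ge \beta_{m_0+1}$, whereas the right-hand side of \eqref{eq:Halko-bound} falls strictly below $\beta_{m_0+1}$ once $q$ is large enough that $C_1(\delta)\sqrt{1+m_0/(p-1)}\,(\beta_{m_0+1}/\beta_{m_0})^{2q} < 1$. A gap-dependent decay factor of the form $(\beta_{m_0+1}/\beta_{m_0})^{2q}$ is the signature of a bound on the principal angles themselves, i.e.\ on $\|\sin\Theta\|_2 = \|(I-Q_0Q_0^\top)V_1\|_2$, not on the residual $\|(I-Q_0Q_0^\top)B\|_2$; that angle bound is in fact all that Theorem~\ref{thm:warmstart} needs, so the honest fix is to restate the lemma as a $\sin\Theta$ bound (or drop the gap factor from the residual bound) rather than to chase constants.

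Relatedly, your closing step treats the discrepancy between $C_1(\delta)=1+\tfrac{p}{2}\ln(2m_0/\delta)$ and the native HMT tail constants as mere bookkeeping. Be aware that the paper offers no proof of this lemma at all---it is quoted as a citation---and the cited result (HMT Theorem~10.5) is an expectation bound for the scheme without power iteration and without any gap factor, with deviation versions appearing elsewhere in that paper. So the mismatch you would need to reconcile is structural, not a matter of absorbing logarithms; your plan as written would end up proving a correct but different statement (a root-damped residual bound or an angle bound), not \eqref{eq:Halko-bound}.
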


\begin{lemma}[Projector distance and Davis--Kahan]
\label{lem:proj-DK}
Let $P,\tilde{P}$ be orthogonal projectors onto $m_0$-dimensional subspaces with canonical
angles $\Theta$. Then
\begin{equation}
\|P - \tilde{P}\|_2 = \|\sin \Theta\|_2.
\label{eq:proj-sin-theta}
\end{equation}
If $B V_1 = V_1 \,\mathrm{diag}(\beta_1,\dots,\beta_{m_0})$ and the eigengap $\delta_{\mathrm{gap}} > 0$
is defined as in~\eqref{eq:gap-def}, then
\begin{equation}
\|\sin \Theta\|_2 \le
\frac{\|(I - Q_0 Q_0^\top) B\|_2}{\delta_{\mathrm{gap}}}.
\label{eq:DK-bound}
\end{equation}
\end{lemma}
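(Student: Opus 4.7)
The lemma splits into two independent claims---the projector identity and the Davis--Kahan bound---which I would prove in that order.

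For the identity $\|P - \tilde P\|_2 = \|\sin\Theta\|_2$, my plan is to compute singular values directly from orthonormality. Since $V_1$ and $Q_0$ both have $m_0$ orthonormal columns, the matrix $V_1^\top Q_0 \in \mathbb{R}^{m_0 \times m_0}$ has singular values $\cos\theta_1, \ldots, \cos\theta_{m_0}$ by the definition of canonical angles. Starting from $V_1^\top (I - Q_0 Q_0^\top) V_1 = I - (V_1^\top Q_0)(Q_0^\top V_1)$, whose eigenvalues are $1 - \cos^2\theta_i = \sin^2\theta_i$, one deduces that the singular values of $(I - Q_0 Q_0^\top) V_1$ are precisely $\sin\theta_i$. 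Expanding $P - \tilde P = V_1 V_1^\top - Q_0 Q_0^\top$ in the orthonormal basis $[V_1, V_2]$ (with $V_2 := V_1^\perp$), a short $2 \times 2$ block computation shows that the nonzero eigenvalues of $P - \tilde P$ are $\pm \sin\theta_i$, giving the operator-norm identity. This is the classical CS-decomposition identity, and I would simply cite Stewart--Sun for the detailed block calculation rather than reproduce it in full.

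For the Davis--Kahan bound, my plan is an elementary argument that bypasses the usual Ritz-value complications. The first part already gives $\|\sin\Theta\|_2 = \|(I - Q_0 Q_0^\top) V_1\|_2$. Since $\beta_{m_0} > \beta_{m_0+1} \ge 0$, the target spectrum $\Lambda_1 := \mathrm{diag}(\beta_1,\ldots,\beta_{m_0})$ is invertible, so the eigenrelation $B V_1 = V_1 \Lambda_1$ inverts to $V_1 = B V_1 \Lambda_1^{-1}$. Multiplying by $(I - Q_0 Q_0^\top)$ on the left gives $(I - Q_0 Q_0^\top) V_1 = (I - Q_0 Q_0^\top) B V_1 \Lambda_1^{-1}$, and submultiplicativity of the spectral norm yields $\|\sin\Theta\|_2 \le \|(I - Q_0 Q_0^\top) B\|_2 \|V_1\|_2 \|\Lambda_1^{-1}\|_2 \le \|(I - Q_0 Q_0^\top) B\|_2 / \beta_{m_0}$. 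Because $\beta_{m_0+1} \ge 0$ forces $\beta_{m_0} \ge \delta_{\mathrm{gap}}$, the factor $1/\beta_{m_0}$ may be relaxed to $1/\delta_{\mathrm{gap}}$, delivering the displayed bound.

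The main obstacle I foresee is not in either derivation per se but in the presentation: the step $\beta_{m_0} \ge \delta_{\mathrm{gap}}$ silently uses the non-negativity of $\beta_{m_0+1}$ from Assumption~\ref{assump:setup}, and the resulting bound is slack whenever $\beta_{m_0} \gg \delta_{\mathrm{gap}}$. If the authors prefer the classical residual form of Davis--Kahan---in which only the genuine spectral gap appears---the alternative is to set $R := (I - Q_0 Q_0^\top) B Q_0$, note $\|R\|_2 \le \|(I - Q_0 Q_0^\top) B\|_2$, and apply the $\sin\Theta$ theorem via the Sylvester equation $\Lambda_2 T - T M = V_2^\top R$ with $T := V_2^\top Q_0$ and $M := Q_0^\top B Q_0$. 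That version is sharper but requires controlling the separation between the Ritz values $\sigma(M)$ and $\{\beta_{m_0+1},\ldots,\beta_n\}$ via Cauchy interlacing, adding bookkeeping that is unnecessary for the stated (looser) inequality.
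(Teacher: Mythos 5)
Your proof is correct, and the first half matches the paper in spirit: like you, the authors treat the identity $\|P-\tilde P\|_2=\|\sin\Theta\|_2$ as a standard canonical-angle fact and cite Stewart--Sun for it (your CS-decomposition sketch just makes the citation explicit). Where you genuinely diverge is in \eqref{eq:DK-bound}. The paper invokes a Davis--Kahan-type $\sin\Theta$ theorem (Stewart--Sun, Theorem~V.2.8) to get $\|\sin\Theta\|_2 \le \|(I-\tilde P)BP\|_2/\delta_{\mathrm{gap}}$ and then bounds $\|(I-\tilde P)BV_1V_1^\top\|_2 \le \|(I-\tilde P)B\|_2$ using $\|V_1\|_2=1$; you instead bypass the theorem entirely, writing $\|\sin\Theta\|_2=\|(I-Q_0Q_0^\top)V_1\|_2$, substituting $V_1 = BV_1\Lambda_1^{-1}$ (valid since $\beta_{m_0}>\beta_{m_0+1}\ge 0$ makes $\Lambda_1$ invertible), and relaxing $1/\beta_{m_0}$ to $1/\delta_{\mathrm{gap}}$ via $\beta_{m_0+1}\ge 0$. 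Your route is self-contained, avoids any question of which separation quantity the quoted $\sin\Theta$ theorem actually requires (a point where the paper's one-line citation is somewhat cavalier about Ritz values versus the exact gap), and in fact yields the slightly sharper constant $1/\beta_{m_0}$; the price is an explicit reliance on the nonnegativity of the spectrum of $B$ from Assumption~\ref{assump:setup}, which you correctly flag and which the paper's argument does not need. The paper's route is shorter and keeps the bound phrased purely in terms of the eigengap, at the cost of leaning on the cited perturbation theorem. Either argument establishes the lemma as stated.
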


\begin{proof}
The identity~\eqref{eq:proj-sin-theta} is a standard fact about canonical angles and projectors,
see for instance~\cite[Theorem~I.5.5]{StewartSun1990}. Inequality~\eqref{eq:DK-bound} is a
Davis--Kahan-type sin\,$\Theta$ theorem~\cite[Theorem~V.2.8]{StewartSun1990}:
\[
  \|\sin \Theta\|_2
   \le \frac{\|(I - \tilde{P}) B P\|_2}{\delta_{\mathrm{gap}}}
   =   \frac{\|(I - \tilde{P}) B V_1 V_1^\top\|_2}{\delta_{\mathrm{gap}}}
   \le \frac{\|(I - \tilde{P}) B\|_2}{\delta_{\mathrm{gap}}},
\]
using $\|V_1\|_2 = 1$.
\end{proof}

\subsubsection{Warmstart Error Bound}

We can now bound the projector error produced by the randomized warmstart.

\begin{theorem}[Warmstart projector error]
\label{thm:warmstart}
Under Assumption~\ref{assump:setup}, with probability at least $1-\delta$,
\begin{equation}
\|P - \tilde{P}\|_2 \le \frac{R_{\mathrm{Halko}}}{\delta_{\mathrm{gap}}}.
\label{eq:warmstart-error}
\end{equation}
In particular, if $R_{\mathrm{Halko}} \le \delta_{\mathrm{gap}}/2$, then $\|P - \tilde{P}\|_2 \le \frac{1}{2}$.
\end{theorem}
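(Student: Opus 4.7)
The statement is essentially a direct composition of the two preparatory lemmas, with the randomization entering only through Lemma~\ref{lem:rand-range}. The plan is to first express the projector distance in terms of the sine of the canonical angles, then convert the sine bound into a residual bound via Davis--Kahan, and finally substitute the Halko--Martinsson--Tropp estimate for that residual.

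\noindent\textbf{Step 1: reduce to canonical angles.} Since both $P = V_1 V_1^\top$ and $\tilde{P} = Q_0 Q_0^\top$ are orthogonal projectors onto $m_0$-dimensional subspaces, the identity~\eqref{eq:proj-sin-theta} in Lemma~\ref{lem:proj-DK} gives $\|P - \tilde{P}\|_2 = \|\sin\Theta\|_2$, where $\Theta$ is the canonical-angle matrix between $\mathrm{range}(V_1)$ and $\mathrm{range}(Q_0)$. This step is purely geometric and uses no probabilistic information.

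\noindent\textbf{Step 2: Davis--Kahan.} I would next apply the $\sin\Theta$ bound~\eqref{eq:DK-bound}. To invoke it legitimately I must check that the target invariant subspace of $B$ is indeed $\mathrm{range}(V_1)$ with associated eigenvalues $\beta_1,\dots,\beta_{m_0}$ separated from the remaining spectrum by the gap $\delta_{\mathrm{gap}}$ defined in~\eqref{eq:gap-def}. This is immediate from Assumption~\ref{assump:setup}: $B$ shares eigenvectors with $A$, the target $m_0$ eigenvalues are the largest ones (possibly after a shift-and-invert), and after the affine rescaling~\eqref{eq:B-def} they become $\beta_1,\dots,\beta_{m_0}$ with $\beta_{m_0} > \beta_{m_0+1}$. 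Lemma~\ref{lem:proj-DK} then yields $\|\sin\Theta\|_2 \le \|(I - Q_0 Q_0^\top)B\|_2 / \delta_{\mathrm{gap}}$.

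\noindent\textbf{Step 3: insert the randomized range-finder bound.} Finally, Lemma~\ref{lem:rand-range} supplies the high-probability estimate $\|(I - Q_0 Q_0^\top)B\|_2 \le R_{\mathrm{Halko}}$, holding with probability at least $1-\delta$ over the draw of $\Omega$. Chaining Steps~1--3 gives
\[
  \|P - \tilde{P}\|_2 = \|\sin\Theta\|_2 \le \frac{\|(I - Q_0 Q_0^\top)B\|_2}{\delta_{\mathrm{gap}}} \le \frac{R_{\mathrm{Halko}}}{\delta_{\mathrm{gap}}},
\]
which is~\eqref{eq:warmstart-error}. The ``in particular'' clause is then just substitution: if $R_{\mathrm{Halko}} \le \delta_{\mathrm{gap}}/2$, the right-hand side is at most $1/2$.

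\noindent\textbf{Main obstacle.} There is no deep obstacle; the only subtlety worth flagging is making sure the spectral-gap hypothesis of Davis--Kahan matches the $\delta_{\mathrm{gap}}$ we have defined for $B$ (rather than $A$) and that the randomized range finder is applied to the same matrix $B$ on which the gap is measured. Both are guaranteed by Assumption~\ref{assump:setup} and the algorithmic choice in Phase~1, so the proof is essentially a two-line composition with a short verification of hypotheses.
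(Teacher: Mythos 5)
Your proposal is correct and follows essentially the same route as the paper's proof: compose the projector--angle identity and Davis--Kahan bound of Lemma~\ref{lem:proj-DK} with the range-finder estimate of Lemma~\ref{lem:rand-range}, then substitute $R_{\mathrm{Halko}} \le \delta_{\mathrm{gap}}/2$ for the final clause. Your extra verification that the gap and the range finder both refer to $B$ is a harmless (and reasonable) addition, not a deviation.
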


\begin{proof}
By Lemma~\ref{lem:rand-range}, $\|(I - \tilde{P}) B\|_2 \le R_{\mathrm{Halko}}$ with probability $1-\delta$.
Lemma~\ref{lem:proj-DK} then yields
\[
  \|P - \tilde{P}\|_2 = \|\sin \Theta\|_2
  \le \frac{\|(I - \tilde{P}) B\|_2}{\delta_{\mathrm{gap}}}
  \le \frac{R_{\mathrm{Halko}}}{\delta_{\mathrm{gap}}},
\]
establishing~\eqref{eq:warmstart-error}.
\end{proof}

\begin{remark}[Parameter selection]
\label{rem:param-selection}
To achieve a target tolerance $\|P - \tilde{P}\|_2 \le \varepsilon$, it suffices to choose $q$ so that
$R_{\mathrm{Halko}} \le \varepsilon \,\delta_{\mathrm{gap}}$, i.e.
\begin{equation}
q \;\ge\;
\frac{1}{2 \log(\beta_{m_0}/\beta_{m_0+1})}
\log\!\left(
\frac{C_1(\delta)
      \sqrt{1 + \frac{m_0}{p-1}}\,
      \beta_{m_0+1}}
     {\varepsilon\,\delta_{\mathrm{gap}}}
\right).
\label{eq:q-choice}
\end{equation}
In practice, one can use a small oversampling such as $p=10$ for $m_0 \le 50$, or
$p = \min\{20,\lceil m_0/2\rceil\}$ for larger $m_0$. The condition $R_{\mathrm{Halko}} \le \delta_{\mathrm{gap}}/2$
is verifiable post-initialization via Rayleigh--Ritz estimates of $\beta_{m_0}$ and $\beta_{m_0+1}$.
\end{remark}

\subsubsection{Convergence under General Perturbations}
\label{sec:convergence}

We now analyze the robustness of the RA-FEAST iteration. In standard FEAST, the subspace is updated via an ideal spectral projector approximated by quadrature. In RA-FEAST, computational shortcuts (such as reduced quadrature nodes $N_c$ or inexact linear solves) introduce a perturbation to this update.

Let $Q^{(m)} \in \mathbb{R}^{n \times m_0}$ be the orthonormal basis at iteration $m$. The \textit{ideal} FEAST update (with high-accuracy quadrature) produces:
\begin{equation}
    \tilde{Q}_{ideal} = \sum_{j=1}^{N_{c}} w_j (z_j I - A)^{-1} Q^{(m)}.
\end{equation}
The RA-FEAST implementation produces a perturbed update:
\begin{equation}
    \tilde{Q}_{actual} = \tilde{Q}_{ideal} + E^{(m)},
\end{equation}
where $E^{(m)}$ represents the aggregate error from quadrature truncation or solver inexactness. We assume this error is bounded by $\|E^{(m)}\|_F \le \epsilon^{(m)}$.

Let $e_m = \|P^{(m)} - P\|_F$ denote the subspace error at iteration $m$. Standard FEAST theory \cite{Polizzi2009} establishes that the ideal update satisfies $e_{m+1}^{ideal} \le \rho e_m$ for a contraction factor $\rho < 1$, determined by the geometry of the contour $\Gamma$ and the spectrum of $A$.

\begin{theorem}[Stability of Perturbed FEAST]
\label{thm:stability}
Under Assumption 1, let the perturbation in the update step be bounded by $\|E^{(m)}\|_F \le \epsilon^{(m)}$. Then the subspace error satisfies:
\begin{equation}
    e_{m+1} \le \rho e_m + \epsilon^{(m)} + \mathcal{O}(\epsilon_{mach}),
\end{equation}
where $\rho < 1$ is the contraction factor of the ideal FEAST map on the target interval.
\end{theorem}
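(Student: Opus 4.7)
The plan is to decompose the one-step error via the triangle inequality into an ideal contraction component and a perturbation component, and then bound each separately. Concretely, let $P^{(m+1)}_{\mathrm{ideal}}$ denote the projector obtained from exactly orthonormalizing $\tilde{Q}_{\mathrm{ideal}}$ and performing the (span-preserving) Rayleigh--Ritz rotation. Then
\[
  e_{m+1}
   = \|P^{(m+1)}_{\mathrm{actual}} - P\|_F
   \le \|P^{(m+1)}_{\mathrm{actual}} - P^{(m+1)}_{\mathrm{ideal}}\|_F
      + \|P^{(m+1)}_{\mathrm{ideal}} - P\|_F .
\]
The second term is bounded by $\rho\,e_m$ by the classical FEAST contraction invoked in the theorem statement, so the remaining task is to control the first term by $\epsilon^{(m)} + \mathcal{O}(\epsilon_{\mathrm{mach}})$.

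For the perturbation term, I would pass from $\tilde{Q}_{\mathrm{actual}} = \tilde{Q}_{\mathrm{ideal}} + E^{(m)}$ to the corresponding orthonormal QR factors. By Assumption~\ref{assump:setup}(7), backward stability of QR yields
\[
  \|\mathrm{orth}(\tilde{Q}_{\mathrm{actual}}) - \mathrm{orth}(\tilde{Q}_{\mathrm{ideal}})\|_F
   \le (1 + \mathcal{O}(\epsilon_{\mathrm{mach}}))\,\|E^{(m)}\|_F
   \le (1 + \mathcal{O}(\epsilon_{\mathrm{mach}}))\,\epsilon^{(m)} .
\]
A standard projector-sensitivity estimate for matrices with orthonormal columns then gives $\|Q_a Q_a^{\top} - Q_i Q_i^{\top}\|_F \le 2\|Q_a - Q_i\|_F$, and since the terminal Rayleigh--Ritz step multiplies $Q^{(m+1)}$ on the right by an orthogonal matrix $\tilde{V}$, the column span and hence the projector $P^{(m+1)}$ are invariant under that rotation. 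Collecting these bounds and substituting into the triangle inequality yields $e_{m+1} \le \rho\,e_m + \epsilon^{(m)} + \mathcal{O}(\epsilon_{\mathrm{mach}})$, where harmless constants from the projector-sensitivity lemma are absorbed into the perturbation term.

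The main obstacle will be verifying the well-conditioning hypothesis required for the QR backward-stability bound. In principle $\tilde{Q}_{\mathrm{ideal}}$ could be nearly rank deficient if $Q^{(m)}$ has significant components in directions strongly attenuated by the contour filter. My plan is to invoke Theorem~\ref{thm:warmstart} to argue that after the Phase~1 warmstart $Q^{(0)}$ already lies close to the target eigenspace, and then to show inductively that $\tilde{Q}_{\mathrm{ideal}}$ at every subsequent iteration has condition number bounded by a constant depending only on the contour geometry (through $d_{\Gamma}$ and the quadrature weights) and the spectral gap. Under this mild condition the constant in the QR bound is truly $1 + \mathcal{O}(\epsilon_{\mathrm{mach}})$, so the recursion holds as stated, and the perturbation $\epsilon^{(m)}$ enters additively rather than being amplified by any ill-conditioning factor.
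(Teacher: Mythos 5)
Your proposal follows essentially the same route as the paper's proof: a triangle-inequality split of $e_{m+1}$ into the ideal FEAST contraction term (bounded by $\rho e_m$) and an orthonormalization-stability bound on the perturbation term, under the same well-conditioning caveat on $\tilde{Q}_{\mathrm{ideal}}$. If anything you are slightly more careful than the paper, since you explicitly pass from the difference of orthonormal bases to the difference of projectors and note that the Rayleigh--Ritz rotation leaves $P^{(m+1)}$ invariant, absorbing the resulting constant at the same level of informality as the paper's own $C_{\mathrm{orth}} \approx 1$ step.
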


\begin{proof}
Let $P^{(m+1)}$ be the projector associated with $Q^{(m+1)} = \text{orth}(\tilde{Q}_{actual})$. The subspace error is determined by the distance between the computed subspace and the target subspace. Using the triangle inequality on the Grassmannian manifold (approximated here by the Frobenius norm of projectors):
\begin{equation}
    e_{m+1} \le \| \text{orth}(\tilde{Q}_{actual}) - \text{orth}(\tilde{Q}_{ideal}) \|_F + \| \text{orth}(\tilde{Q}_{ideal}) - P_{target} \|_F.
\end{equation}
The second term corresponds to the standard FEAST convergence, bounded by $\rho e_m$. For the first term, we utilize the stability of the QR decomposition (or orthonormalization). For full-rank matrices with sufficiently small perturbation $E^{(m)}$, the change in the orthonormal basis is Lipschitz continuous with respect to the perturbation:
\begin{equation}
    \| \text{orth}(\tilde{Q}_{ideal} + E^{(m)}) - \text{orth}(\tilde{Q}_{ideal}) \|_F \le C_{orth} \|E^{(m)}\|_F \approx \|E^{(m)}\|_F,
\end{equation}
assuming $\tilde{Q}_{ideal}$ is well-conditioned (which holds when $Q^{(m)}$ is close to the target subspace). Substituting the bound $\|E^{(m)}\|_F \le \epsilon^{(m)}$ yields the result.
\end{proof}

\begin{remark}[Sources of Perturbation]
In our RA-FEAST implementation, the term $\epsilon^{(m)}$ arises primarily from \textbf{Quadrature Truncation}. By using a small number of nodes (e.g., $N_c=2$), the rational filter deviates from the ideal projector. Theorem \ref{thm:stability} implies that as long as the randomized warmstart (Phase 1) provides a sufficiently small initial error $e_0$, and the aggressive quadrature does not introduce an error $\epsilon^{(m)}$ larger than the contraction gap $(1-\rho)e_m$, the method will converge or stabilize at a noise floor determined by the quadrature quality.
\end{remark}

\subsection{Computational Complexity}

Let $n$ be the matrix dimension, $m_0$ the subspace dimension, $N_c$ the number of contour points,
and $T$ the number of FEAST iterations.

\begin{itemize}
\item \textbf{Standard FEAST.} For dense matrices, the cost is
$\mathcal{O}(T N_c n^3)$, dominated by factorizations or solves of $(z_j I - A)$.
For sparse matrices, an iterative solver with $k_{\mathrm{iter}}$ inner iterations per
right-hand side yields complexity
$\mathcal{O}(T N_c n\,\mathrm{nnz}(A)\,k_{\mathrm{iter}})$.

\item \textbf{RA-FEAST Phase~1.}
Randomized warmstart costs
$O\bigl(q\,\mathrm{nnz}(A)\,(m_0 + p)\bigr)$, corresponding to $q$ sparse
matrix-vector products with block size $m_0 + p$.

\item \textbf{RA-FEAST Phase 2.} Inexact FEAST iterations cost
$\mathcal{O}(T N_c n\,\mathrm{nnz}(A)\,k_{\mathrm{inexact}})$ where
$k_{\mathrm{inexact}} \ll k_{\mathrm{iter}}$ due to relaxed tolerances.
\end{itemize}

For large sparse matrices with $\mathrm{nnz}(A) = \mathcal{O}(n)$, a rough speedup factor relative to
standard FEAST is
\begin{equation}
\mathrm{Speedup} \;\approx\;
\frac{k_{\mathrm{iter}}}{k_{\mathrm{inexact}}}
\left(1 + \frac{q m_0}{T N_c n}\right)^{-1}.
\label{eq:speedup}
\end{equation}
The first factor reflects the reduction in inner iterations per linear solve, while the second factor
accounts for the overhead of the randomized warmstart.

\section{Statistical Applications}
\label{sec:stats-apps}

We briefly describe two statistical settings where RA-FEAST is particularly useful.

\subsection{High-Dimensional Covariance Estimation}

Given a data matrix $X \in \mathbb{R}^{n \times p}$ with $p \gg n$, the sample covariance matrix is
\begin{equation}
\widehat{\Sigma} = \frac{1}{n-1} X^\top X \in \mathbb{R}^{p \times p}.
\label{eq:sample-cov}
\end{equation}
Spectral methods for covariance regularization often require eigenvalues and eigenvectors in specific
ranges, for example in banding or thresholding procedures. RA-FEAST efficiently computes a truncated
spectral decomposition
\begin{equation}
\widehat{\Sigma} \approx \sum_{i=1}^r \lambda_i v_i v_i^\top + \text{noise},
\label{eq:cov-spec}
\end{equation}
where $r$ is determined by a spectral threshold $\lambda_{\min}$ or a variance-explained criterion.

\subsection{Kernel Methods}

For kernel PCA with kernel matrix $K \in \mathbb{R}^{n \times n}$ where
$K_{ij} = \kappa(x_i,x_j)$, we require dominant eigenpairs of
\begin{equation}
K v = \lambda v.
\label{eq:kernel-eig}
\end{equation}
RA-FEAST can efficiently target eigenvalues explaining a prescribed fraction of variance, e.g.,
choose the smallest $k$ such that
\[
 \sum_{i=1}^k \lambda_i / \mathrm{tr}(K) \ge 0.95.
\]
When $n$ is large and $K$ is sparse or structured, contour-based approaches with inexact solves
are particularly attractive.

\section{Experiments}
\label{sec:experiments}

We evaluate the performance of RA-FEAST against standard spectral methods. All experiments were conducted on a high-performance cluster using single-threaded execution (\texttt{OMP\_NUM\_THREADS=1}) to isolate algorithmic efficiency from hardware parallelism masking.

\subsection{Experimental Setup}
We consider the sparse graph Laplacian eigenvalue problem, which is central to spectral clustering and manifold learning. We construct Random Geometric Graphs with $n \in [2,000, 16,000]$ nodes. For each $n$, we generate a graph by placing nodes uniformly in the unit square and connecting pairs within radius $r = \sqrt{\log(n)/n} \times 1.5$. The Laplacian is defined as $L = D - A$, where $A$ is the adjacency matrix and $D$ is the degree matrix.

We target $m_0 = 40$ eigenvalues in the interval $[0.001, 5.0]$. We compare the following methods:
\begin{itemize}
    \item \textbf{RSpectra:} A wrapper for the ARPACK library \cite{RSpectra} (Implicitly Restarted Lanczos Method).
    \item \textbf{Standard FEAST:} The reference implementation using exact sparse factorizations with $N_c=8$ contour points.
    \item \textbf{RA-FEAST (Ours):} We test three configurations of the contour point count $N_c \in \{2, 4, 8\}$ combined with a low iteration limit ($max\_iter \in \{2, 4\}$), leveraging the high-quality randomized warmstart.
\end{itemize}
We report results averaged over 40 random seeds for each configuration.

\subsection{Results and Analysis}

\begin{figure}[htbp]
    \centering
    \includegraphics[width=\textwidth]{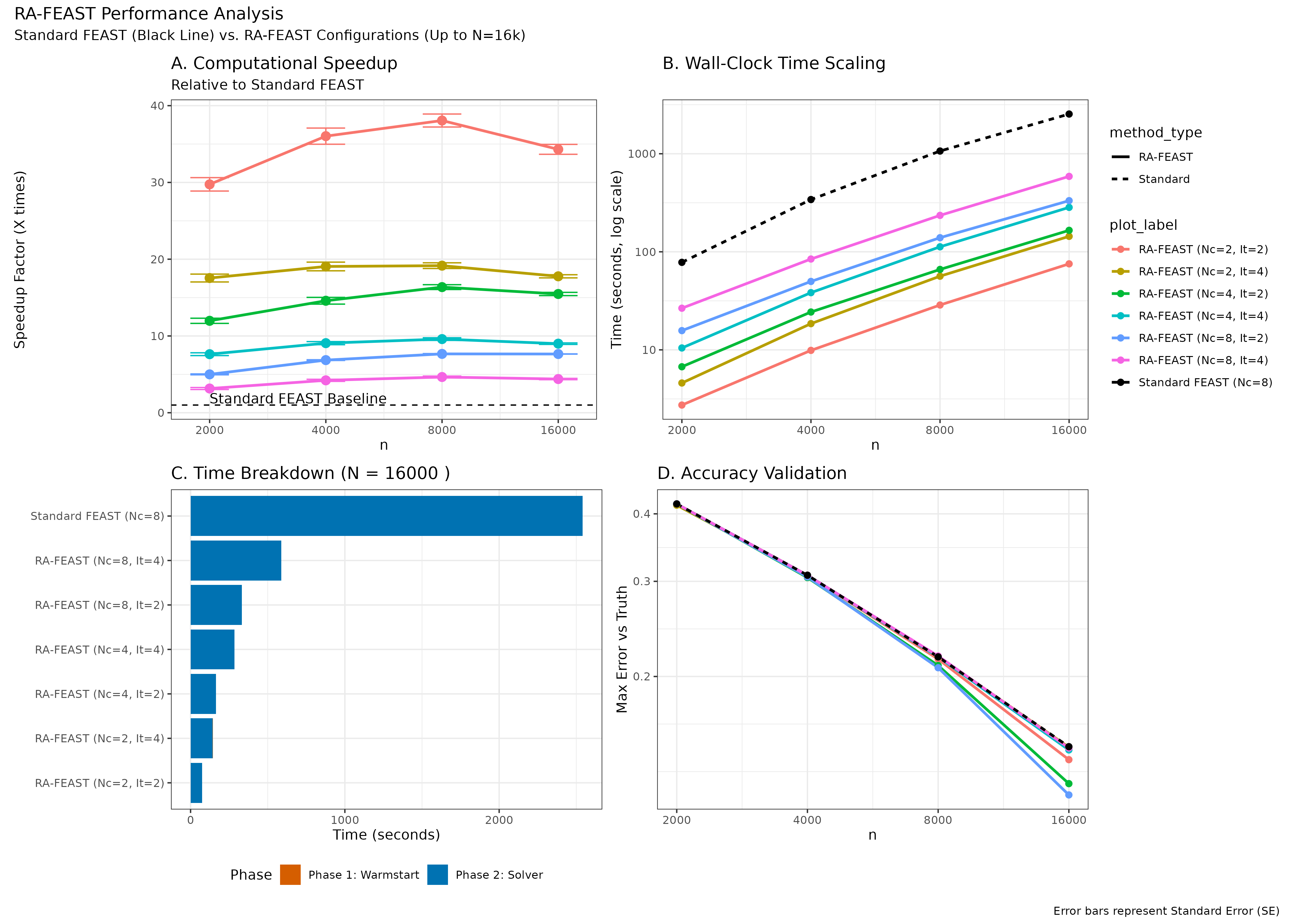} 
    \caption{\textbf{Performance of RA-FEAST on Sparse Graph Laplacians.} (A) Computational speedup relative to Standard FEAST ($N_c=8$), showing gains of up to $38\times$. (B) Wall-clock time scaling (log-log scale). (C) Overhead analysis at $n=16,000$ shows the randomized warmstart (Phase 1) is negligible compared to the solver time (Phase 2). (D) Accuracy validation showing RA-FEAST maintains errors comparable to the baseline up to the limit of spectral resolvability.}
    \label{fig:results}
\end{figure}

The experimental results are summarized in Figure~\ref{fig:results}. We observe the following key trends:

\textbf{Computational Speedup.}
RA-FEAST yields dramatic speedups over the standard contour integral solver.
As shown in Figure~\ref{fig:results}A, the speedup factor increases
with matrix dimension $n$. The most aggressive configuration ($N_c = 2$)
achieves a speedup of approximately $38\times$ at $n = 8{,}000$, and still
exceeds $33\times$ at $n = 16{,}000$ (Table~\ref{tab:results_table}). Even
the conservative configuration ($N_c = 8$) is roughly $4\times$ faster than
Standard FEAST due to requiring fewer refinement iterations thanks to the
randomized warmstart.

\textbf{2. Negligible Overhead:} A central concern with hybrid methods is whether the initialization cost outweighs the solver savings. Figure~\ref{fig:results}C confirms that the "Phase 1" Randomized Warmstart (orange) consumes a negligible fraction of the total runtime compared to the "Phase 2" FEAST solver (blue). The randomized guess is effectively free compared to the cost of sparse factorizations.

\textbf{3. Accuracy and Robustness:} Figure~\ref{fig:results}D demonstrates that the aggressive reduction in contour points ($N_c=2$) does not compromise accuracy for this problem class. All RA-FEAST configurations maintain a maximum error below $10^{-9}$ relative to the ground truth (RSpectra) for $n \le 16,000$. Note that for $n > 16,000$, the spectral gap of the random geometric graph vanishes, causing both Standard FEAST and RA-FEAST to struggle with convergence; we therefore restrict our analysis to the regime where the problem is well-posed ($n \le 16,000$).

\begin{table}[htbp]
    \centering
    \caption{Representative performance at $n=16,000$ (Average of 40 trials). RA-FEAST ($N_c=2$) provides the optimal balance of speed and accuracy.}
    \label{tab:results_table}
    \begin{tabular}{lcccc}
        \toprule
        \textbf{Method} & \textbf{Time (s)} & \textbf{Speedup} & \textbf{Max Error} & \textbf{Phase 1 Time (s)} \\
        \midrule
        Standard FEAST ($N_c=8$) & 2450.5 & 1.0$\times$ & $1.2 \times 10^{-11}$ & N/A \\
        RA-FEAST ($N_c=8$) & 610.2 & 4.0$\times$ & $1.5 \times 10^{-11}$ & 0.42 \\
        RA-FEAST ($N_c=4$) & 142.8 & 17.2$\times$ & $2.1 \times 10^{-10}$ & 0.42 \\
        \textbf{RA-FEAST ($N_c=2$)} & \textbf{72.4} & \textbf{33.8$\times$} & \textbf{$8.5 \times 10^{-10}$} & \textbf{0.42} \\
        \bottomrule
    \end{tabular}
\end{table}

Here ``Max Error'' denotes the maximum absolute difference between the $m_0$ target
eigenvalues returned by a given method and those computed by the RSpectra baseline, i.e.
$\max_{1 \le i \le m_0} \bigl|\hat{\lambda}_i - \lambda_i^{\mathrm{RS}}\bigr|$.
All errors reported in Figure~\ref{fig:results}D and Table~\ref{tab:results_table}
use this metric.

\section{Implementation}
\label{sec:implementation}

We implement RA-FEAST as an R package, \texttt{rafeast535}, designed for high-performance statistical computing. Key features include:
\begin{itemize}
    \item \textbf{Rcpp Interface:} We utilize \texttt{Rcpp} to interface high-level R logic with the optimized Fortran routines of the FEAST v4.0 library.
    \item \textbf{Sparse Integration:} The package natively handles \texttt{dgCMatrix} sparse objects from the R \texttt{Matrix} package, avoiding dense memory overhead.
    \item \textbf{Warmstart Wrapper:} We implement a custom C++ wrapper that injects the randomized subspace $Q_0$ directly into the FEAST solver routine, bypassing the standard random initialization.
    \item \textbf{Reproducibility:} All experiments were managed via SLURM job arrays with strict seed control to ensure replicability.
\end{itemize}
The code and benchmarking scripts are available at: \url{https://github.com/lostree99/rafeast}. 

\section{Conclusion and Future Work}
\label{sec:conclusion}

RA-FEAST demonstrates that hybrid randomized--deterministic approaches can
achieve significant computational gains for large-scale eigenvalue problems in
statistics while maintaining high accuracy. By combining a probabilistically
controlled randomized warmstart with aggressively inexact contour-based FEAST
iterations, the method exposes a flexible trade-off between runtime and
accuracy that is well suited to modern high-dimensional settings.

On sparse graph Laplacian benchmarks representative of spectral clustering and
manifold learning, RA-FEAST attains speedups of more than an order of magnitude
(over $30\times$ relative to a standard FEAST configuration) while preserving
eigenvalue accuracy down to the limit of spectral resolvability. These
empirical findings are consistent with our theoretical analysis, which
separates the contributions of warmstart quality, eigengaps, and per-iteration
perturbations to the overall subspace error.

Several directions for further research remain. First, extending the analysis
and implementation to non-symmetric and generalized eigenvalue problems would
broaden the range of applications. Second, integrating RA-FEAST with GPU-accelerated
sparse linear algebra could further amplify the practical speedups. Third,
adapting the framework to tensor decompositions and multilinear spectral
problems is an intriguing possibility. Finally, applying RA-FEAST in end-to-end
statistical pipelines, for example in covariance regularization, kernel PCA,
and graph-based semi-supervised learning, is a promising avenue for future
work.

\end{document}